\documentclass[11pt]{amsart}
\usepackage{a4wide}
\usepackage{amsmath, amsfonts, amssymb,amscd,graphicx,amsthm,cite}
\usepackage{mathrsfs,url}
\usepackage{hyperref}
\usepackage{color}

\theoremstyle{plain}

    \newtheorem{thm}{Theorem}
    \newtheorem{theorem}[thm]{Theorem}

    \newtheorem{lemma}[thm]{Lemma}

    \newtheorem{question}[thm]{Question}

\theoremstyle{definition}

		\newtheorem{remark}[thm]{Remark}

\theoremstyle{remark}




\newcommand{\ignore}[1]{}


\newcommand{\Z}{{\mathbf Z}}

\newcommand{\alg}[1]{\mathbf{#1}}

\newcommand{\algA}{\alg{A}}

\DeclareMathOperator{\Eq}{\mathsf{Eq}}
\DeclareMathOperator{\Id}{\mathsf{Id}}

\DeclareMathOperator{\pEq}{\mathsf{Eq}}
\DeclareMathOperator{\pId}{\mathsf{Id}}

\DeclareMathOperator{\expo}{\mathrm exp}

\author{Michael Kompatscher}
\date{\today}
\title{Some notes on extended equation solvability and identity checking for groups}

\begin{document}

\begin{abstract}
Every finite non-nilpotent group can be extended by a term operation such that solving equations in the resulting algebra is NP-complete and checking identities is co-NP-complete. This result was firstly proven by Horv{\'a}th and Szab{\'o}; the term constructed in their proof depends on the underlying group. In this paper we provide a uniform term extension that induces hardness. In doing so we also characterize a big class of solvable, non-nilpotent groups for which extending by the commutator operation suffices.
\end{abstract}

\maketitle

\section{Introduction}

The \emph{equation solvability problem $\pEq(\algA)$} of a finite algebra $\algA$ is the computational problem of deciding whether for two polynomials $f(x_1,\ldots,x_n)$, $g(x_1,\ldots,x_n)$ over $\algA$ the equation $f(x_1,\ldots,x_n) = g(x_1,\ldots,x_n)$ has a solution in $\algA$ or not. The \emph{identity checking problem $\pId(\algA)$} (also known as the \emph{equivalence problem}) is the dual problem that asks whether for \emph{all} assignments of the variables $x_1,\ldots,x_n$ the equation $f(x_1,\ldots,x_n) = g(x_1,\ldots,x_n)$ holds.

Many results regarding the complexity of these problems are known for finite groups. If the group $G$ is nilpotent both $\pId(G)$ and $\pEq(G)$ are in P (see \cite{goldmannrussell}, \cite{BurrisLawrence-groups}, \cite{HorvathEqSolvNilp}, \cite{Attila-nilpotentEqs}). More general, so called semipattern groups induce problems that are in P \cite{Attila-semipattern}. By \cite{Horvathetal-nonsolvablegroups} the equation solvability problem of a non-solvable group is NP-complete and its identity checking problem is co-NP-complete. However the complexity of both problems still remains unclassified for general solvable, non-nilpotent groups, with the smallest group for which it is unknown being the symmetric group $S_4$ (see Problem 1 in \cite{HorvathSzabo-A4}).

Taking a different approach one might ask whether adding additional definable operations to the signature of the group has an influence on the complexity of the problem. Non-solvable groups induce hard problems, thus also extension by terms induce hard problems. By \cite{HorvathEqSolvNilp} extensions of nilpotent groups still induce problems that are in P (and the same is true in the more general setting of supernilpotent Mal'cev algebras \cite{Kompatscher-supernilpotent}). In the terminology of \cite{GorazdKrzaczkowski} the problems are \emph{representation-independent} for those algebras.

However for solvable, non-nilpotent groups the complexity may differ: It was shown in \cite{HorvathSzabo-A4} that for the alternating group $A_4$ and the commutator operation $[x,y] = x^{-1}y^{-1}xy$, $\pId(A_4;\cdot, [\cdot,\cdot])$ is co-NP-complete and $\pEq(A_4;\cdot, [\cdot,\cdot])$ is NP-complete, while both problems for $(A_4;\cdot)$ are in P. The reason for this phenomenon is that some operation can be defined in a more concise way in the extended group. For example, the length of $[\ldots [[x_1,x_2],x_3], \ldots, x_n]$ is linear in $n$ if expressed using the commutator, but $O(2^n)$ just using the group multiplication.

By \cite{HorvathSzabo-extendedgroups} every finite solvable, non-nilpotent group $(G;\cdot)$ has an extension by a term operation $f(x_1,\ldots,x_n)$ such that $\pEq(G;\cdot,f(x_1,\ldots,x_n))$ is NP-complete and $\pId(G;\cdot,f(x_1,\ldots,x_n))$ is co-NP-complete. However, the term $f$ constructed in \cite{HorvathSzabo-extendedgroups} and also its arity depend on $G$.

The first contribution of this paper is to give a uniform extension: We are going to show that the extension of a solvable, non-nilpotent groups by its commutator operation $[x,y] = x^{-1}y^{-1}xy$ and the additional 4-ary function defined by $w(x,y_1,y_2,y_3) = x^8  [[[x,y_1],y_2],y_3]$ always induces NP-hard equation solvability and co-NP-hard identity checking problems.

Secondly we prove that many cases are covered by the commutator operation alone, while for all other the extension by $w$ suffices. In doing so we proceed on answering Problem 1 of \cite{HorvathSzabo-extendedgroups}, which asks for a complete classification of the equation solvability and identity checking problem for groups extended by their commutator. Our main result states as follows:

\begin{theorem} \label{theorem:main}
Let $G$ be a finite solvable, non-nilpotent group and $L$ be the smallest group of the derived series of $G$ that is not nilpotent. By $F(L)$ we denote the Fitting subgroup of $L$. Furthermore we define the operations $[x,y] = x^{-1}y^{-1}xy$ and $w(x,y_1,y_2,y_3) = x^8  [[[x,y_1],y_2],y_3]$.
\begin{enumerate}
\item If $\expo(L/F(L)) > 2$ then $\Eq(G,\cdot,[\cdot,\cdot])$ is NP-complete and $\Id(G,\cdot,[\cdot,\cdot])$ is co-NP-complete.
\item If $\expo(L/F(L)) = 2$ then $\Eq(G,\cdot,w)$ is NP-complete and $\Id(G,\cdot,w)$ is co-NP-complete.
\end{enumerate}
\end{theorem}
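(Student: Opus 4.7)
The plan is to reduce NP-hardness of $\Eq(G, \cdot, [\cdot,\cdot])$ (respectively $\Eq(G, \cdot, w)$) to a hardness result for the subgroup $L$, and then to build polynomially short terms over $L$ that encode an NP-complete problem such as 3-SAT. The reduction from $G$ to $L$ is standard: since $L = G^{(k)}$ lies in the derived series, one can force fresh variables to take values in $L$ by wrapping them in $k$-fold iterated commutators of $G$-variables, giving a polynomial-size reduction of $\Eq(L,\cdot,[\cdot,\cdot])$ to $\Eq(G,\cdot,[\cdot,\cdot])$ (and similarly with $w$). So it suffices to prove hardness over $L$. Because $L$ is the \emph{smallest} non-nilpotent term of the derived series, $L'$ is nilpotent and hence contained in $F(L)$, so $L/F(L)$ is a non-trivial abelian group on which the theorem's case distinction is made.

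The central algebraic tool is the observation that the iterated commutator
\[
c_n(x, y_1, \ldots, y_n) = [[\cdots [x, y_1], y_2], \cdots, y_n]
\]
has length linear in $n$ in the extended signature, while its expansion in the pure group operation is of exponential length. I would use this to encode truth assignments of $n$ boolean variables by short terms. In case (1), where $\expo(L/F(L)) > 2$, one picks an element whose image modulo $F(L)$ has order greater than $2$ and uses it to build a commutator gadget with at least three distinguishable states, which is enough to carry out a polynomial reduction from 3-SAT or 3-colorability to $\Eq(L,\cdot,[\cdot,\cdot])$. This step essentially generalizes the $A_4$-argument of Horv\'ath and Szab\'o to arbitrary $L$ satisfying the exponent hypothesis.

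In case (2), where $\expo(L/F(L)) = 2$, pure iterated commutators suffer from $2$-torsion collapses in the elementary abelian quotient, and the $A_4$-style gadget breaks down. The four-ary operation $w(x,y_1,y_2,y_3) = x^8[[[x,y_1],y_2],y_3]$ is designed to repair this: the factor $x^8$ lands in $F(L)$ and absorbs the troublesome $2$-primary contribution there, while the depth-$3$ commutator still contracts an otherwise exponentially long word. With $w$ replacing the bare iterated commutator, I would reproduce the encoding from case (1). Co-NP-hardness of $\Id$ then follows by the standard duality between equation solvability and identity checking in groups (negate the target element and ask whether the resulting equation is universally false).

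The main obstacle is verifying that the commutator- and $w$-based gadgets really do realize the truth-table behavior needed for a 3-SAT reduction: each variable's truth value must correspond cleanly to the coset of $F(L)$ occupied by an element of $L$, and the satisfaction of a clause must translate into whether a short iterated commutator hits a prescribed element. This requires careful commutator calculus inside $F(L)$ and a delicate matching of syntactic terms with their target elements in $L$. In case (2), the specific exponent $8$ in $w$ must be justified as the minimal power absorbing all $2$-torsion contributions given the nilpotency class of $F(L)$; this is most naturally analyzed through the associated graded Lie ring of the lower central series of $F(L)$, and is where I expect the bulk of the technical work to lie.
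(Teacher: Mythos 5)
Your high-level outline (reduce to $L$, exploit the linear-length iterated commutator, case-split on $\expo(L/F(L))$, use $x^8$ to repair the exponent-$2$ case) matches the paper's in spirit, but the sketch is missing the ideas that actually make the gadgets work, and one of your technical explanations points in the wrong direction.

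First, it does \emph{not} suffice to work inside $L$ and encode truth values by cosets of $F(L)$. The paper's Lemmas \ref{lemma:eqreduction} and \ref{lemma:idreduction} pass to a further quotient $H$ of $L$ that possesses a minimal normal subgroup $N$ with $[H,N]=N$ and $[H',N]=\{e\}$, and the hardness gadgets live on $N$ and the cosets of $C_H(N)$, not on $F(L)$. This quotient step is genuinely necessary: the paper's remark after Lemma \ref{lemma:idreduction} notes that $SL(2,3)$ is non-nilpotent with nilpotent commutator subgroup, yet its only minimal normal subgroup is central, so no such $N$ exists in the group itself. Moreover the quotients needed for $\Eq$ and for $\Id$ are different (a normal-subgroup quotient versus iterated centralizer quotients, via Lemma \ref{lemma:reductions} (3) and (4)), a distinction your sketch does not register. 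The engine of the whole proof is Lemma \ref{lemma:facts2} (2): every $b\notin C_H(N)$ acts on $N$ as a fixed-point-free automorphism $n\mapsto[n,b]$, so $t_k(x,b_1,\ldots,b_k)$ restricted to $x\in N$ is an automorphism when all $b_j\notin C_H(N)$ and is constantly $e$ otherwise. In case (1) this yields a reduction from $|H/C_H(N)|$-coloring (feeding $y_{v_1}y_{v_2}^{-1}$ into the slots, so that an edge is ``good'' exactly when its endpoints lie in different cosets), not a 3-SAT reduction keyed to order-$>2$ elements as you propose. You also omit how to force $x$ into $N$; the paper does this by substituting a polynomial with range $N$, using $[H,N]=N$.

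Second, your explanation of the exponent $8$ is off. It has nothing to do with the nilpotency class of $F(L)$ or a graded Lie ring analysis. When $|H/C_H(N)|=2$, Lemma \ref{lemma:facts2} (3) shows every $b\notin C_H(N)$ inverts $N$ and $N$ is cyclic of odd prime order; hence $[x,b]=x^{-2}$ for $x\in N$, so $[[[x,y_1],y_2],y_3]=x^{-8}$ when all three $y_i$ lie outside $C_H(N)$, and it collapses to $e$ when some $y_i\in C_H(N)$. Thus $w(x,y_1,y_2,y_3)$ equals $e$ in the first case and $x^8$ (an automorphism of the odd-order group $N$) in the second; this is precisely the OR-gate needed for clauses of a 3-CNF, and composing these gates gives the 3-SAT reduction. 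Without identifying this inversion action the choice of $8=2^3$ cannot be justified, and the ``truth-table behavior'' you flag as the main obstacle is exactly the content you would still need to supply.
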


We remark that (1) generalizes the hardness result for $A_4$ and the class of groups studied in Section 8.5 of \cite{GaborThesis}. A complete classification of the complexity for groups extended by their commutator is still open. But - as a consequence of our proof - it would suffice to prove that non-nilpotent dihedral groups induce hard problems in order to show hardness for \textit{all} groups in (2). We discuss this in more detail in Section \ref{sect:discussion} at the end of the paper.

In Section \ref{sect:preliminaries} we introduce notation and recall standard definitions from group theory. In Section \ref{sect:reductions} we use some complexity reduction to prove that it is enough to consider a very specific subclass of solvable, non-nilpotent groups in the proof of Theorem \ref{theorem:main}. Section \ref{sect:main} contains the proof of Theorem \ref{theorem:main}.

\section{Preliminaries} \label{sect:preliminaries}
An \textit{algebra} $\algA = (A;(f^{\algA})_{f \in \tau})$ (of \textit{type} $\tau$) consists of a set $A$ (its \textit{domain}) and a set of finitary operations $f^A: A^{ar(f)} \to A$ for every function symbol $f$ with arity $ar(f)$ in $\tau$. Often we are not going to distinguish between function symbols and their corresponding operation, but this should not cause any confusion. In this paper we only consider finite algebras, i.e. algebra of both finite domain and finite type. 

A \emph{term} $t(x_1,\ldots,x_n)$ over $\algA$ is an expression built using variables $x_1,x_2, \ldots$ and function symbols in $\tau$. The \emph{term operation} $t^\algA$ is the operation $A^n \to A$ that we obtain if we interpret every function symbol as its corresponding operation in $\algA$. \emph{Polynomials} over $\algA$ are terms, in whose construction we are additionally allowed to use constants from $A$. By the \emph{extension} of the algebra $\algA = (A;f_1^{\algA},\ldots,f_n^{\algA})$ by the term $t(x_1,\ldots,x_n)$ we denote the algebra $(\algA,t) = (A;f_1^{\algA},\ldots,f_n^{\algA},t^{\algA})$

As defined in the introduction the \emph{equation solvability problem $\pEq(\algA)$} of a finite algebra $\algA$ is defined as the computational problem of deciding whether for two polynomials $f(x_1,\ldots,x_n), g(x_1,\ldots,x_n)$ it holds that $\algA \models \exists x_1,\ldots, x_n f(x_1,\ldots,x_n) = g(x_1,\ldots,x_n)$ or not. The \emph{identity checking problem $\pEq(\algA)$} of a finite algebra $\algA$ asks whether for two polynomials $f(x_1,\ldots,x_n), g(x_1,\ldots,x_n)$ it holds that $\algA \models \forall x_1,\ldots, x_n f(x_1,\ldots,x_n) = g(x_1,\ldots,x_n)$ or not. 

Here a polynomial is encoded by a string defining it, which in a finite algebra is proportional to its \emph{length} (see e.g. \cite{AichingerMudrinskiOprsal-length} for a precise definition). We remark that in the literature one can also find other ways of encoding polynomials, which might result in different complexities (e.g. by circuits in \cite{IdziakKrzaczkowski}, or certain normal forms like in \cite{HLW-Sumofmonomialsrings}).

In this paper we are going to study finite groups $\mathbf G = (G;\cdot, e, ^{-1})$ and their term extensions. In particular $(\mathbf G, [\cdot,\cdot])$ will denote extension of the group by the \emph{commutator term} $[x,y] = x^{-1}y^{-1}xy$. For simplicities sake we are going to abuse notation and sometimes use the symbol $G$ both for the group and its underlying domain. 

The \emph{exponent $\expo(G)$} of a group is the smallest positive integer $n$ such that $g^n=e$ holds for all $g \in G$. For an element $a \in G$ we define $\langle a \rangle$ to be the smallest normal subgroup of $G$ containing $a$, i.e. the group generated by $a$ and all of its conjugated elements. For two subsets $V$, $W$ of a group $[V,W]$ will denotes the subgroup generated by all elements of the form $[v,w]$, where $v \in V$ and $w \in W$. By $G'$ we are going to denote the \emph{commutator subgroup} of $G$, i.e. $G' = [G,G]$. The \emph{centralizer} of a subset $V \subseteq G$ is defined by $C_G(V) = \{x \in G: \forall v \in V xv = vx\}$. The centralizer $C_G(G)$ is called the \emph{center} of $G$.

The \emph{derived series} of a group $G$ is defined by $G^{(1)}= G'$ and $G^{(i+1)} = [G^{(i)},G^{(i)}]$. A group is called \emph{solvable} if there is a $k$ such that $G^{(k)} = \{e\}$. The \emph{lower central series of $G$} is defined by $G_1 = G'$ and $G_{i+1} = [G,G_i]$. The \emph{upper central series of $G$} is defined by $Z_1 = C_G(G)$ and $Z_{i+1} = C_G(G/Z_i)$. A group is \emph{nilpotent} if there is a $k$ such that $G_k = \{e\}$, or equivalently $Z_k = G$.

The \emph{Fitting subgroup $F(G)$} of $G$ is the biggest nilpotent normal subgroup of $G$. It is well known \cite{BaerEngelsche} that in finite groups $F(G)$ is equal to the set of all \emph{left Engel elements}, i.e. elements $g \in G$ such that for all $h \in G$ there is an $n \in N$ with $[\cdots[[h,\underbrace{g],g],\ldots,g]}_n = e$.

\section{Some complexity reductions} \label{sect:reductions}

In this section we are going to show that in the proof of Theorem \ref{theorem:main} it suffices to consider groups $G$ that posses a nilpotent commutator subgroup, and a minimal normal subgroup $N$ of $G$ with $[G,N] = N$. The reductions we use in proving this statement slightly differ for the equation solvability problem and the identity checking problem, we are going to provide them in Lemma \ref{lemma:eqreduction} and \ref{lemma:idreduction}.

Before we start let us recall the following facts about the commutator: 

\begin{lemma} \label{lemma:facts1}
Let $G$ be a group and $N$ a normal subgroup of $G$. Then
\begin{enumerate}
\item For all $f,g,h \in G$: $[f^{-1}gf,f^{-1}hf] = f^{-1} [g,h] f$
\item For all $f \in G$, $g \in C_G(N)$, $n \in N$: $[n,f] = [n,fg] = [n,gf]$
\item If $N$ is abelian, for all $n,m \in N$, $b \in G$: $[n,b] [m,b] = [nm,b]$
\end{enumerate}
\end{lemma}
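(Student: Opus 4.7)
All three identities are straightforward commutator calculations, so the plan is just to expand definitions and use normality/abelianness at the right moments; I do not expect any of them to present a real obstacle.

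For part (1), I would simply expand $[f^{-1}gf, f^{-1}hf]$ using the definition $[a,b] = a^{-1}b^{-1}ab$. The four factors $f^{-1}g^{-1}f$, $f^{-1}h^{-1}f$, $f^{-1}gf$, $f^{-1}hf$ telescope: each internal pair $ff^{-1}$ cancels, leaving $f^{-1}\,g^{-1}h^{-1}gh\,f = f^{-1}[g,h]f$. This is just conjugation respecting the group operation.

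For part (2), the point is that $g \in C_G(N)$ commutes with every element of $N$, and normality of $N$ gives $f^{-1}nf \in N$. Expanding $[n,gf] = n^{-1}f^{-1}g^{-1}ngf$, the subword $g^{-1}ng$ equals $n$ since $g$ centralizes $N$ and $n \in N$, and what remains is $n^{-1}f^{-1}nf = [n,f]$. For $[n,fg] = n^{-1}g^{-1}f^{-1}nfg$, one instead uses that $g$ commutes with $f^{-1}nf \in N$, so $g^{-1}(f^{-1}nf)g = f^{-1}nf$, and again $[n,fg] = [n,f]$ falls out.

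For part (3), since $N$ is normal and abelian, all four elements $n^{-1}, m^{-1}, b^{-1}nb, b^{-1}mb$ lie in $N$ and therefore pairwise commute. Writing $[n,b][m,b] = (n^{-1}b^{-1}nb)(m^{-1}b^{-1}mb)$, I can freely move the middle factor $b^{-1}nb$ past $m^{-1}$ to obtain $n^{-1}m^{-1}(b^{-1}nb)(b^{-1}mb) = (nm)^{-1}b^{-1}(nm)b = [nm,b]$, using $n^{-1}m^{-1}=(nm)^{-1}$ (another consequence of commutativity in $N$) and the cancellation $bb^{-1}=e$ in the middle. No step requires anything beyond the hypotheses already given, so the whole lemma is essentially bookkeeping.
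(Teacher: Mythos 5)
Your computations are correct and this is exactly the intended argument: the paper itself dismisses all three identities as ``easy computations,'' and your proposal simply carries them out, using the paper's convention $[a,b]=a^{-1}b^{-1}ab$ together with normality and centralizing/abelianness exactly where needed.
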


\begin{proof}
All 3 statements follow from easy computations.
\end{proof}


Let us call a subgroup $V$ of $G$ \emph{verbal} if it is the range of some term $p_V(x_1,\ldots,x_n)$ of $G$. Then the following (many-to-one) reductions hold:

\begin{lemma} \label{lemma:reductions}
Let $\mathbf V$ be a verbal subgroup of $\mathbf G$ and $f$ be a term in the language of groups. Then 
\begin{enumerate}
\item $\Eq(\mathbf V,f)$ reduces to $\Eq(\mathbf G,f)$ in polynomial time.
\item $\Id(\mathbf V,f)$ reduces to $\Id(\mathbf G,f)$ in polynomial time.
\end{enumerate}
If $V$ is a normal subgroup of $G$, let $H = G/V$, $K = G/C_G(V)$. Then 
\begin{enumerate}
\item[(3)] $\Eq(\mathbf{H},f)$ reduces to $\Eq(\mathbf{G},f)$ in polynomial time.
\item[(4)] $\Id(\mathbf {K},f)$ reduces in polynomial time to $\Id(\mathbf G,f)$.
\end{enumerate}
\end{lemma}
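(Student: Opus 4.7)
The plan is to produce, for each of the four items, an explicit polynomial-time many-to-one reduction; the unifying idea is that, because $V$ equals $\ran(p_V)$ for some term $p_V(x_1,\ldots,x_n)$ in the language of groups, quantifying a fresh $n$-tuple $\bar y$ over $G^n$ and then applying $p_V$ is the same as quantifying a single variable over $V$. This observation, combined with translating each coset-constant by an arbitrary representative in $G$, will give all four reductions.

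For (1) and (2) I would take the input polynomials $p_1,p_2$ over $\mathbf V$ and substitute each variable $x_i$ by $p_V(x_{i,1},\ldots,x_{i,n})$ with fresh $G$-variables; the constants, being already elements of $V \subseteq G$, are left unchanged. Since $V$ is a subgroup and $f$ is a group term, the value of $p_j$ on any $V$-tuple is the same whether computed in $\mathbf V$ or in $\mathbf G$. Using $\ran(p_V)=V$, the substituted equation is solvable (respectively an identity) in $\mathbf G$ iff the original is solvable (respectively an identity) in $\mathbf V$. The blow-up is bounded by $|p_V|\cdot(|p_1|+|p_2|)$, so the reduction is in polynomial time.

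For (3), given $p_1,p_2$ over $\mathbf H = \mathbf G/\mathbf V$, I would first pick representatives in $G$ for all coset-constants to obtain $\tilde p_1,\tilde p_2$ over $\mathbf G$, and then output the $\mathbf G$-equation
\[
  \tilde p_1(\bar x) \;=\; \tilde p_2(\bar x)\cdot p_V(y_1,\ldots,y_n)
\]
with fresh variables $\bar y$. Because $\tilde p_1(\bar g)\equiv \tilde p_2(\bar g)\pmod V$ is the same as $\tilde p_1(\bar g)\tilde p_2(\bar g)^{-1}\in V=\ran(p_V)$, solvability of the original instance in $\mathbf H$ is equivalent to solvability of the new instance in $\mathbf G$, independently of the chosen representatives.

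The step that I expect to need the most care is (4), because one cannot simply replace coset-constants by representatives: the identity in $\mathbf K = \mathbf G/C_G(V)$ demands equality modulo $C_G(V)$, not equality in $\mathbf G$. My plan is to encode membership in $C_G(V)$ by a universal commutator identity. After replacing coset-constants by representatives to obtain $\tilde p_1,\tilde p_2$ over $\mathbf G$, I would output the $\mathbf G$-identity
\[
  \bigl[\tilde p_1(\bar x)\,\tilde p_2(\bar x)^{-1},\ p_V(y_1,\ldots,y_n)\bigr] \;=\; e,
\]
which is a bona fide group identity because $[\cdot,\cdot]$ unfolds to a group term. Its correctness rests on two observations: $\tilde p_1(\bar g)\,C_G(V)=\tilde p_2(\bar g)\,C_G(V)$ iff $\tilde p_1(\bar g)\tilde p_2(\bar g)^{-1}$ commutes with every $v\in V$; and because $V=\ran(p_V)$, the quantifier "for every $v\in V$" can be replaced by "for every $\bar y\in G^n$" applied to $p_V(\bar y)$. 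Once this commutator trick is in place, the rest of the argument is bookkeeping, and the polynomial-time bound is immediate from the fact that $p_V$ is a fixed term of bounded size.
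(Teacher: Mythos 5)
Your proposal is correct, and it is essentially the standard argument: the paper itself gives no in-line proof but cites Lemmas 9 and 10 of Horv\'ath--Szab\'o, whose proofs use exactly your devices (substituting $p_V(\bar y)$ for variables to quantify over the verbal subgroup, multiplying by $p_V(\bar y)$ to express congruence modulo $V$, and the commutator identity $[\tilde p_1\tilde p_2^{-1},p_V(\bar y)]=e$ to express congruence modulo $C_G(V)$). All four reductions check out, including the well-definedness with respect to the choice of coset representatives and the polynomial length bounds.
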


\begin{proof}
Lemma 9 and 10 of \cite{HorvathSzabo-extendedgroups}.
\end{proof}

Note that the commutator subgroup group of $G$ (and more general $[V,W]$ for verbal group $V$ and arbitrary $W$) is a verbal subgroup of $G$. Lemma~\ref{lemma:reductions} (1) and (2) thus imply that for every element of the derived series $L = G^{(i)}$, $\Eq(\mathbf L,f)$ reduces to $\Eq(\mathbf G,f)$ and $\Id(\mathbf L,f)$ reduces to $\Id(\mathbf G,f)$ in polynomial time. In particular this implies that in proving Theorem \ref{theorem:main} it is enough to consider the unique element of the derived series $L = G^{(i)}$ such that $L$ is not nilpotent, but $L'$ is. If we want to proceed further, we have to use the reductions Lemma~\ref{lemma:reductions} (3) and (4) respectively.

\begin{lemma} \label{lemma:eqreduction}
Let $G$ be a finite solvable, non-nilpotent group and $L$ be the last non-nilpotent element of the derived series of $G$. Then there exists an element $K$ of the lower central series of $L'$ such that $H = L / K$ has a minimal normal subgroup $N$ with $[H,N] = N$ and $[H',N]=\{e\}$. Furthermore $N$ can be chosen such that $|H/C_H(N)| > 2$ if and only if $\expo(L/ F(L)) > 2$.
\end{lemma}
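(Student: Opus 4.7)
My plan is to work inside the lower central series $L' = M_0 \supsetneq M_1 = L'' \supsetneq M_2 = [L', L''] \supsetneq \cdots \supsetneq M_r = \{e\}$ of $L'$, where $M_{i+1} := [L', M_i]$; each $M_i$ is characteristic in $L'$, hence normal in $L$, and the series terminates by nilpotency of $L'$. Because $L/M_0 = L/L'$ is abelian (hence nilpotent) while $L/M_r = L$ is not, there is a least $j \geq 1$ with $L/M_j$ non-nilpotent. I would set $K := M_j$, $H := L/K$, and $A := M_{j-1}/M_j$. From $[L', M_{j-1}] = M_j$ one gets $[H', A] = \{e\}$, so $A$ is abelian and contained in $Z(H')$; and since $H/A \cong L/M_{j-1}$ is nilpotent by minimality of $j$, centrality of $A$ in $H$ would make $H$ a central extension of a nilpotent group by an abelian one, hence nilpotent --- contradicting our choice. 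Therefore $[H, A] \neq \{e\}$.

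The main step, and the principal obstacle, is to extract from $A$ a minimal normal subgroup $N$ of $H$ satisfying $[H, N] = N$: the condition $[H', N] = \{e\}$ is then automatic from $N \subseteq A$, and $[H, N] = N$ follows from minimality once $N \not\subseteq Z(H)$. The subtlety is that a non-trivial action on an abelian $p$-group need not restrict non-trivially to every minimal submodule (e.g.\ $C_2$ acting by inversion on $\mathbb{Z}/4$ is trivial on the socle $2\mathbb{Z}/4$). To deal with this I would decompose $A = \bigoplus_p A_p$, pick a prime $p$ with $[H, A_p] \neq \{e\}$, and exploit that the $H$-action on $A_p$ factors through the abelian quotient $H/H' = L/L'$. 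An iterated-centre argument then excludes the pathological case: if every minimal $H$-invariant subgroup of $A$ were contained in $Z(H)$, following a chief series of $H$ inside $A$ would place $A$ inside some $Z_s(H)$, and combined with the nilpotency of $H/A$ would force $H$ itself to be nilpotent --- a contradiction. Thus some minimal $H$-invariant subgroup $N \subseteq A$ is non-central in $H$, giving the desired minimal normal subgroup.

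For the final equivalence, the key observation is that $F(L) \subseteq C_L(N)$ for every valid $N$: indeed, $N \subseteq L'/M_j \subseteq F(L)/M_j$, the group $F(L)/M_j$ is nilpotent, and in a nilpotent group every non-trivial normal subgroup meets the centre, so $N \cap Z(F(L)/M_j)$ is a non-trivial $L$-invariant subgroup of the $L$-irreducible module $N$ --- hence equal to $N$, placing $N$ inside $Z(F(L)/M_j)$. Consequently $L/C_L(N)$ is a quotient of the abelian group $L/F(L)$; by Schur's lemma applied over $\mathbb{F}_p$ (the action factors through the abelian group $H/H'$ and is irreducible on the elementary abelian $p$-group $N$), $L/C_L(N)$ is moreover cyclic. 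The orders of cyclic quotients of the abelian group $L/F(L)$ are exactly the divisors of $\expo(L/F(L))$, so $|H/C_H(N)| > 2$ is realisable for some valid $N$ precisely when $\expo(L/F(L)) > 2$; the realisation is achieved by choosing $K = M_j$ and $N$ so that the resulting cyclic quotient of $L/F(L)$ attains the maximal possible order.
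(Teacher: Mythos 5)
Your construction is genuinely different from the paper's, which builds $N$ bottom-up: it picks $g\notin F(L)$, uses the Engel-element characterization of $F(L)$ to find $a$ on which iterating $x\mapsto[x,g]$ never reaches $e$, and chooses $K$ adapted to that specific $a$. Your top-down route has one clean payoff: the observation that $F(L)\subseteq C_L(N)$ together with Schur's lemma shows every admissible $H/C_H(N)$ is a cyclic quotient of the abelian group $L/F(L)$, which neatly gives the direction $\expo(L/F(L))=2\Rightarrow|H/C_H(N)|=2$. But two steps are genuinely incomplete. The first is the ``iterated-centre argument'': from the hypothesis that every minimal normal subgroup of $H$ contained in $A$ is central you cannot conclude, by walking up a chief series, that $A\leq Z_s(H)$ --- centrality of the socle says nothing a priori about the higher chief factors, since your hypothesis concerns minimal normal subgroups of $H$ itself, not of the quotients $H/A_i$ appearing along the series. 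The implication you want is true, but the proof must go through the coprime splitting you only allude to: with $Q=H/C_H(A_p)$ abelian, write $Q=Q_p\times Q_{p'}$; if $Q_{p'}=1$ for every $p$ then each action is a $p$-group acting on a $p$-group, hence nilpotent, forcing $H$ nilpotent; otherwise $[A_p,Q_{p'}]$ is a nontrivial normal subgroup on which $Q_{p'}$ has no nontrivial fixed points, so any minimal normal subgroup of $H$ inside it is automatically non-central. You name the right tools (primary decomposition, abelian action) but the step as written does not prove the claim.

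The second and more serious gap is the realisability half of the final equivalence. Knowing that each $H/C_H(N)$ is a cyclic quotient of $L/F(L)$, and that $L/F(L)$ abstractly has cyclic quotients of every order dividing $\expo(L/F(L))$, does not show that an order $>2$ is \emph{attained} by some admissible $N$: nothing in your construction rules out that for your fixed $K=M_j$ every non-central minimal normal subgroup $N\leq A$ has $|H/C_H(N)|=2$ even though $\expo(L/F(L))>2$ (this would be excluded if one knew $F(H)=F(L)/K$, but that equality is itself nontrivial and unproved). This is exactly where the paper does its real work: it takes $g$ with $g,g^2\notin F(L)$, constructs $N$ from a witness $a$ for the non-Engel property of $g^2$, and then shows that both $g$ and $g^2$ act nontrivially on that $N$, so the coset $gC_H(N)$ has order at least $3$. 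Your construction carries no distinguished element along with it and therefore has no handle with which to force $|H/C_H(N)|>2$; an argument of this kind needs to be added before the ``only if'' direction can be considered proved.
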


\begin{proof}
Let $L$ be the last non-nilpotent element of the derived series. Since $L'$ is nilpotent and $L$ is not we have $L' \leq F(L) < L$. Let $g \in L \setminus F(L)$ and let us define the map $r_g: x \mapsto [x,g]$. Since $g \in L \setminus F(L)$, there is an element $a \in L$ such that $r_g^n(a) \neq e$ for all $n \in \mathbb N$. By the finiteness of $L$ we can without loss of generality assume that there is an $m$ such that $r_g^m(a) = a$. As $L'$ is nilpotent its lower central series is eventually equal to $\{e\}$. Let $K$ be the first element of this lower central series such that $a \notin K$. We set $H = L/K$. The normal subgroup $\langle a \rangle$ generated by $a$ in $H$ is abelian and satisfies $[H',\langle a \rangle] = \{e\}$, because it is contained in the last non-trivial element of the lower central series of $H'$. Lemma~\ref{lemma:facts1} (3) implies that the map $r_g$ is an endomorphism of $\langle a \rangle$.

An easy calculation using Lemma \ref{lemma:facts1} (1) shows $[x^{-1}ax,g] = x^{-1}[a,[x^{-1},g^{-1}]g]x$. As $H'$ lies in the centralizer of $a$, we have $r_g(x^{-1}ax) = x^{-1}r_g(a)x$. Since $a$ and all of its conjugated generate $\langle a \rangle$ and $r_g^m(a) = a$, $r_g$ has to be an automorphism of $\langle a \rangle$. Without loss of generality we can assume that $N=\langle a \rangle$ is a minimal normal subgroup - otherwise we take such minimal normal subgroup contained in $\langle a \rangle$. This concludes the proof of the first part of the Lemma.

For the second part suppose that $\expo(L / F(L)) > 2$; in other words there is a $g \in L$ such that $g, g^2 \in L \setminus F(L)$. As above we can construct the group $H$ and a minimal normal subgroup $N = \langle a \rangle$ of $H$ such that $r_{g^2}: x \mapsto [x,g^2]$ is an automorphism of $N$. We claim that also $r_{g}$ is an automorphism of $N$. Otherwise $r_{g}(m) = [m,g] = e$ would hold for some $m \neq e$. But $N$ is minimal and thus generated by $m$. This in turn implies that $N$ centralizes $g$ and consequently also $g^2$ - contradiction.

In order to prove the opposite direction suppose that $\expo(L / F(L)) = 2$, i.e. for all $g \in L$ we have that $g^2 \in F(L)$. Let $K, H$ and $N$ as above. As $K$ is nilpotent it is a subgroup of $F(L)$ and thus $L / F(L)$ is isomorphic to $H/(F(L)/K)$. Since $(F(L)/K)$ is nilpotent, it is a subgroup of $F(H)$ which implies that $\expo(H / F(H)) = 2$. By the nilpotency of $F(H)$ and minimality of $N$ we have that $[F(H),N] = \{e\}$. Therefore, for every $g \notin F(L)$ and all $a \in N$ we have $[a,g^2] = e$. An easy calculation shows $[a,g^2] = [a,g] g^{-1}[a,g]g$ which is equal to $[ag^{-1}ag,g]$ by Lemma \ref{lemma:facts1}. As $r_g$ is an automorphism of $N$, $[a,g^2] = e$ implies $g^{-1}ag = a^{-1}$. Now let $h,g \in L \setminus C(N)$. By what we showed $h^{-1}g^{-1}agh = a$ holds for all $a \in N$, hence $gh \in C(N)$. This concludes the proof that $|H / C_H(N)| = 2$.
\end{proof}

\begin{lemma} \label{lemma:idreduction}
Let $G$ be a finite solvable, non-nilpotent group and $L$ be the last non-nilpotent element of the derived series. Let $H_0 = L$ and $H_{i+1} = H_i/ C_{H_i}(H_i')$. Then there is an $i$ such that $H = H_i$ has a minimal normal subgroup $N$ with $[H,N] = N$ and $[H',N]=\{e\}$. Furthermore $N$ can be chosen such that $|H/C_H(N)| > 2$ if and only if $\expo(L / F(L)) > 2$.
\end{lemma}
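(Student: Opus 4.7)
The plan is to mimic the proof of Lemma~\ref{lemma:eqreduction} step by step, substituting the centralizer-quotient iteration $(H_i)$ for the lower-central-series quotient used there. As in Lemma~\ref{lemma:eqreduction}, I would first pick $g\in L\setminus F(L)$ and $a\in L$ with $r_g^n(a)\neq e$ for all $n\in\mathbb N$, and by finiteness arrange that $r_g^m(a)=a$ for some $m\geq 1$, so in particular $a\in L'$.

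Let $\pi_i\colon L\to H_i$ denote the canonical projection and put $M_i=\ker\pi_i$. Unpacking the recursion $H_{i+1}=H_i/C_{H_i}(H_i')$ gives $M_0=\{e\}$ and $M_{i+1}=\{x\in L:[x,L']\subseteq M_i\}$. The first technical step is to prove by induction on $i$ that $M_i\cap L'=Z_i(L')$, the $i$-th term of the upper central series of $L'$; the inductive step is immediate from the recursive description of $M_{i+1}$ together with $[x,L']\subseteq L'$. Since $L'$ is nilpotent, some $Z_c(L')$ equals $L'$, hence $a\in M_c$, and I pick $i$ to be the smallest index with $a\in M_{i+1}$. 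Then $\bar a:=aM_i$ is a nontrivial element of $H:=H_i$ lying in $M_{i+1}/M_i=C_H(H')$, and since $\bar a\in H'$ as well this places $\bar a$ in $Z(H')$; hence $\langle\bar a\rangle\leq Z(H')$ is abelian with $[H',\langle\bar a\rangle]=\{e\}$. From this point the argument of Lemma~\ref{lemma:eqreduction} transfers: parts~(1) and (2) of Lemma~\ref{lemma:facts1} yield $r_{\bar g}(\bar x^{-1}\bar a\bar x)=\bar x^{-1}r_{\bar g}(\bar a)\bar x$, which makes $r_{\bar g}$ an endomorphism of $\langle\bar a\rangle$, and $r_g^m(a)=a$ upgrades it to an automorphism. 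Taking $N$ to be a minimal normal subgroup of $H$ contained in $\langle\bar a\rangle$ then forces $[H,N]=N$ (because $r_{\bar g}$ restricts to an automorphism of $N$) and $[H',N]\subseteq[H',\langle\bar a\rangle]=\{e\}$.

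For the $\expo$-clause the forward implication is identical to the one in Lemma~\ref{lemma:eqreduction}: starting from $g\in L$ with both $g$ and $g^2$ in $L\setminus F(L)$ one performs the construction using $g^2$ in place of $g$, and then observes that if $r_{\bar g}$ acted trivially on $N$ then $g^2$ would also centralize $N$, contradicting the construction. The converse is the one place where a genuine adjustment is needed, because the subgroups $M_i$ produced here are not guaranteed to be nilpotent and so need not lie in $F(L)$. The fix is to push $F(L)$ forward rather than pull $F(H)$ back: the image $F(L)M_i/M_i$ is a nilpotent normal subgroup of $H_i$, hence sits inside $F(H_i)$, and this already suffices to conclude that $\expo(H/F(H))$ divides $\expo(L/F(L))=2$. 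Combined with $\langle\bar a\rangle\leq F(H)$ (since $\langle\bar a\rangle$ is abelian normal) and the observation that minimality of $N$ together with nilpotency of $F(H)$ force $[F(H),N]=\{e\}$, the concluding computation of Lemma~\ref{lemma:eqreduction} goes through unchanged to yield $|H/C_H(N)|=2$. The principal technical obstacle I anticipate is the inductive identification $M_i\cap L'=Z_i(L')$, which is what allows one to locate a suitable stopping index $i$ and port the rest of the argument of Lemma~\ref{lemma:eqreduction} across.
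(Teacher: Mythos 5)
Your proposal is correct and follows essentially the same route as the paper: locate the image of $a$ in the centre of $H_i'$ for a suitable index $i$ (your identification $M_i\cap L'=Z_i(L')$ is just the precise form of the paper's observation that such an $i$ exists because $L'$ is nilpotent and $a\in L'$) and then transport the argument of Lemma~\ref{lemma:eqreduction} verbatim. The only divergence is minor: for the converse of the exponent clause the paper's parenthetical iterates the nilpotency of each $C_{H_j}(H_j')$ (so that $C_{H_j}(H_j')\leq F(H_j)$ and $H_{j+1}/F(H_{j+1})$ is a quotient of $H_j/F(H_j)$ at every step), whereas you push $F(L)$ forward through $\pi_i$ in one shot; both give $\expo(H/F(H))$ dividing $2$, after which the concluding computation is identical.
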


\begin{proof}
Let $L$ be the last non-nilpotent element of the derived series of $G$. For $g \in L \setminus F(L)$ let us define the map $r_g: x \mapsto [x,g]$. As $g \in L \setminus F(L)$ there is an element $a \in L$ such that $r_g^n(a) \neq e$ for all $n \in \mathbb N$. By the finiteness of $L$ we can furthermore assume that there is an integer $m$ with $r_g^m(a) = a$. Let $H_0 = L$ and $H_{i+1} = H_i/ C_{H_i}(H_i')$. As $L'$ is nilpotent and $a \in L'$ there is an index $i$ such that $aH_{i-1} \in C_{H_i}(H_i') \setminus \{e\}$. We set $H = H_i$. The normal subgroup $\langle a \rangle$ generated by $a$ in $H$ is abelian, as it is contained in the center of $H'$. Lemma~\ref{lemma:facts1} (3) implies that the map $r_g$ is an endomorphism of $\langle a \rangle$. As in the proof of Lemma \ref{lemma:eqreduction} we can show that $r_g$ is actually an automorphism of $\langle a \rangle$. Furthermore we can assume that $N=\langle a \rangle$ is a minimal normal subgroup of $H$ - otherwise we take such minimal normal subgroup contained in $\langle a \rangle$. This concludes the proof of the first part of the Lemma.

The proof of the second part is also analogous to the proof of Lemma \ref{lemma:eqreduction} (using the fact that $C_{H_i}(H_i')$ is always nilpotent, as $[[C_{H_i}(H_i'),C_{H_i}(H_i')],C_{H_i}(H_i')] \leq [H_i',C_{H_i}(H_i')] = \{e\}$).
\end{proof}

\begin{remark}
We remark that the group $H$ constructed in Lemma \ref{lemma:eqreduction} and \ref{lemma:idreduction} might differ for the same $G$, even using the same elements $g,a \in G$. Further note that forming the quotients in Lemma \ref{lemma:eqreduction} and \ref{lemma:idreduction} is indeed a necessary step in proving the existence of a minimal normal subgroup $N$ with $[H,N] = N$: An example of a group without this property is the special linear group $SL(2,3)$. It is not nilpotent and its commutator subgroup is the (nilpotent) quaternion group, but its only minimal normal subgroup $\mathbb Z_2$ is equal to its center.
\end{remark}

Both Lemma \ref{lemma:eqreduction} and \ref{lemma:idreduction} reduce the task of proving Theorem \ref{theorem:main} to a very specific subclass of solvable, not nilpotent groups. In the following lemma we list some of the properties of such groups that were partially already proven in Lemma \ref{lemma:eqreduction}.

\begin{lemma} \label{lemma:facts2} Let $G$ be a finite solvable group and $N$ a minimal normal subgroup, such that $[G,N] = N$ and $[G',N] = \{e\}$. Then
\begin{enumerate}
\item $N$ is elementary abelian.
\item For all $b \in G \setminus C_G(N)$ the map $r_b: n \mapsto [n,b]$ is an automorphism of $N$.
\item If $|G / C_G(N)| = 2$, then for all $b \in G \setminus C_G(N)$ and $n \in N$ we have $b^{-1}nb = n^{-1}$. \\ This implies further that $N$ is a cyclic group of odd prime order.
\end{enumerate}
\end{lemma}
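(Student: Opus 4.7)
The plan is to handle (1) with the classical decomposition of a minimal normal subgroup of a solvable group, and to exploit the hypothesis $G' \subseteq C_G(N)$ (which is exactly $[G',N]=\{e\}$) together with Lemma~\ref{lemma:facts1} to reduce (2) and (3) to minimality arguments on $N$.

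For (1), $N' = [N,N]$ is characteristic in $N$ and hence normal in $G$; since $N$ is solvable and nontrivial, $N' \neq N$, so minimality forces $N' = \{e\}$ and $N$ is abelian. The $p$-primary component of $N$ for any prime $p \mid |N|$ is then characteristic, so minimality makes $N$ a $p$-group for a single prime. The subgroup $\{n \in N : n^p = e\}$ is nontrivial and characteristic, so by minimality again it equals $N$, showing that $N$ is elementary abelian.

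For (2), I would show that $r_b(N) = [N,b]$ is normal in $G$. It is a subgroup because Lemma~\ref{lemma:facts1}(3) makes $r_b$ a homomorphism of the abelian group $N$. For $G$-invariance, Lemma~\ref{lemma:facts1}(1) gives $g[n,b]g^{-1} = [gng^{-1}, gbg^{-1}]$ for any $g \in G$. Writing $gbg^{-1} = b\cdot[b,g^{-1}]$ with $[b,g^{-1}] \in G' \subseteq C_G(N)$, Lemma~\ref{lemma:facts1}(2) lets me drop the central factor and obtain $g[n,b]g^{-1} = [gng^{-1},b] \in [N,b]$. Since $b \notin C_G(N)$ gives $[N,b] \neq \{e\}$, minimality of $N$ forces $[N,b] = N$, so the endomorphism $r_b$ is surjective on the finite group $N$ and is therefore an automorphism.

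For (3), the identity $[n,b^2] = [n,b]\cdot b^{-1}[n,b] b$ (the same one verified in the proof of Lemma~\ref{lemma:eqreduction}) combined with $b^2 \in C_G(N)$ gives $b^{-1}[n,b]b = [n,b]^{-1}$. Since $r_b$ is surjective by (2), this reads $b^{-1}mb = m^{-1}$ for every $m \in N$. If the prime $p$ with $N \cong \mathbb{F}_p^k$ were $2$, inversion would be trivial and $b$ would centralize $N$, contradicting $b \notin C_G(N)$; so $p$ is odd. The action of $G$ on $N$ then factors through $G/C_G(N)\cong\mathbb{Z}_2$ via inversion, leaving every subgroup of $N$ invariant, so minimality forces $N$ to have no proper nontrivial subgroup and hence be cyclic of odd prime order $p$. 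The one step requiring real care is the $G$-invariance calculation in (2); everything else follows cleanly from minimality of $N$.
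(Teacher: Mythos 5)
Your proof is correct and follows essentially the same route as the paper's (very terse) argument: the standard characteristic-subgroup argument for (1), the observation that $r_b$ is an endomorphism commuting with $G$-conjugation (via Lemma~\ref{lemma:facts1} and $G'\leq C_G(N)$) so that minimality forces surjectivity for (2), and the identity $[n,b^2]=[n,b]\cdot b^{-1}[n,b]b$ together with $b^2\in C_G(N)$ for (3). You have simply written out in full the details that the paper delegates to ``well-known facts'' and to the corresponding steps in the proof of Lemma~\ref{lemma:eqreduction}.
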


\begin{proof}
(1) is a well-known fact that holds for all minimal normal subgroups in all finite solvable groups. (2) is analogous to the proof step of Lemma \ref{lemma:eqreduction}, in which we show that $r_g$ is an automorphism of $N$. To show that $b^{-1}nb = n^{-1}$ in (3) we can also argue as in the proof of Lemma \ref{lemma:eqreduction}. If $\exp(N) = 2$ this would imply $b^{-1}nb = n$, thus $b$ commutes with $N$ - contradiction! So by (1) $\exp(N)$ has to be equal to some odd prime $p$. Since for all $b \in G \setminus C_G(N)$ $[n,b] = n^{-2}$, the commutator group $N = [G,\{n\}]$ just consists of powers of $n$, i.e it is cyclic.
%
\end{proof}

\section{Proof of the main theorem} \label{sect:main}

We are now ready to prove Theorem \ref{theorem:main}. Depending on the index of $C_G(N)$ in $G$ we do a case distinction.

\begin{lemma} \label{lemma:odd}
Let $\mathbf G = (G;\cdot,e,^{-1})$ be a finite solvable and non-nilpotent group. Furthermore assume that there is a non-trivial minimal normal subgroup $N$ such that $C_G(N) \geq G'$ and $|G/C_G(N)| > 2$. Then $\Eq(\mathbf G, [\cdot,\cdot])$ is NP-complete and $\Id(\mathbf G, [\cdot,\cdot])$ is co-NP-complete.
\end{lemma}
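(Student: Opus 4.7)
The aim is to prove NP-hardness of $\Eq(\mathbf G,[\cdot,\cdot])$ and co-NP-hardness of $\Id(\mathbf G,[\cdot,\cdot])$; the membership claims are routine, since a candidate assignment or counterexample can be evaluated in polynomial time. My plan is to give polynomial-time reductions from 3-SAT and from its complement, respectively, following the template of the $A_4$ argument in \cite{HorvathSzabo-A4}. The crucial enabling feature is the presence of the commutator in the signature: the iterated term $[\ldots[[a, y_1], y_2], \ldots, y_k]$ has length linear in $k$, whereas in the pure group language it would expand to length $\Theta(2^k)$. It is this succinctness that lets polynomial-size SAT instances translate into polynomial-size equations.

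Lemma \ref{lemma:facts2} supplies the needed structure: $N$ is elementary abelian of some prime exponent $p$, and $G$ acts on $N$ through $K := G/C_G(N)$ as a faithful, irreducible $\mathbb{F}_p K$-module. For $a \in N$ and $y \in G$ one has $[a,y] = (\phi_y - \mathrm{id})(a)$, where $\phi_y$ denotes the conjugation-action of $y$ on $N$; hence
\[
[\ldots[[a,y_1],y_2],\ldots,y_k] = (\phi_{y_k} - \mathrm{id})\cdots(\phi_{y_1}-\mathrm{id})(a),
\]
an element of $N$ that vanishes as soon as any one $y_i$ lies in $C_G(N)$. Using $|K| > 2$ I fix $a \in N \setminus \{e\}$ and representatives of at least three distinct cosets of $C_G(N)$. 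For each Boolean variable of the 3-SAT instance I introduce one group variable $x_i$, restricted to a prescribed two-coset range by a short polynomial substitution, and encode each literal by a short polynomial in $x_i$ that lands inside or outside $C_G(N)$ according to its truth value under the intended assignment. Each clause $C_j$ then becomes an iterated commutator $Q_j$ of $a$ with the three negated-literal terms, so that $Q_j = e$ precisely when $C_j$ is satisfied.

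The step I expect to be the main obstacle is combining the clause-polynomials $Q_1,\ldots,Q_m$ into a single equation whose solvability over $\mathbf G$ is equivalent to the simultaneous vanishing of all $Q_j$; the naive choice $\prod_j Q_j = e$ can fail, since $N$ is abelian and summands may cancel. Here the hypothesis $|K| > 2$ becomes essential: it furnishes enough distinct non-trivial actions on $N$ so that one can choose auxiliary constants $c_1,\ldots,c_m \in G$ (and possibly additional slack variables) with the property that an aggregated expression such as $\prod_j [Q_j,c_j] = e$, relying on the irreducibility of $N$ as an $\mathbb{F}_p K$-module, is forced to vanish only when every $Q_j$ does. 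Once this aggregation is carried out, the NP-hardness reduction for $\Eq$ follows immediately, and a parallel construction built from the un-negated clause polynomials, viewed as a putative universal identity, yields the co-NP-hardness of $\Id$.
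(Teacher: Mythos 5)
Your overall strategy (iterated commutators as ``switches'' that vanish when some argument lies in $C_G(N)$ and act as automorphisms of $N$ otherwise, plus a verbal-polynomial substitution to confine $x$ to $N$) matches the machinery the paper uses, but your choice of source problem creates a gap that your sketch does not close. Reducing from 3-SAT forces you to express a conjunction of $m$ separate vanishing conditions $Q_1=\cdots=Q_m=e$ as a single equation, and the aggregation you propose, $\prod_j [Q_j,c_j]=e$, does not work: each $[Q_j,c_j]$ is again an element of the abelian group $N$ (it equals $(\phi_{c_j}-\mathrm{id})(Q_j)$), so the product is a sum in an $\mathbb{F}_p$-vector space and can vanish by cancellation between nonzero summands. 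Irreducibility of $N$ as a module over $G/C_G(N)$ is of no help here --- it rules out proper submodules, not linear dependencies among $m$ elements; indeed $N$ may simply be cyclic of order $p$, where any two nonzero terms can cancel. A product of boundedly many values in a fixed finite group cannot in general equal $e$ only when every factor does, so no choice of the constants $c_j$ repairs this step.

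The paper sidesteps the issue by reducing from $|G/C_G(N)|$-graph-coloring (NP-complete since $|G/C_G(N)|\ge 3$) rather than from 3-SAT. Each edge $(v_1,v_2)$ contributes the single condition $y_{v_1}y_{v_2}^{-1}\notin C_G(N)$ --- no intra-constraint disjunction is needed --- and the conjunction over all edges is realized by \emph{composition} rather than by a product: one forms the single iterated commutator $t_{|E|}(x,(y_{v_1}y_{v_2}^{-1})_{(v_1,v_2)\in E})$, which restricted to $x\in N$ is an automorphism of $N$ when every edge is properly colored and collapses to the constant $e$ as soon as one edge fails. Hence the one equation $r(x,\bar y)=n$ with $n\in N\setminus\{e\}$ is solvable exactly when the graph is colorable, and the identity $r(x,\bar y)=e$ holds exactly when it is not. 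If you want to keep 3-SAT, you would need a gadget turning the disjunction inside a clause into a map that is an automorphism of $N$ when the clause is satisfied and trivial otherwise, so that clauses can again be chained by composition; this is precisely what the extra operation $w(x,y_1,y_2,y_3)=x^8[[[x,y_1],y_2],y_3]$ achieves in the case $|G/C_G(N)|=2$ (Lemma~\ref{lemma:even}), and it is not available from the commutator alone in the present setting.
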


\begin{proof} We prove the statement by constructing terms in $(\mathbf G,{[\cdot,\cdot]})$ that allow us to reduce the $|G/C_G(N)|$-graph-coloring problem to $\Eq(\mathbf G,{[\cdot,\cdot]})$ and its complement to $\Id(\mathbf G,{[\cdot,\cdot]})$. Reductions of similar type were already used in \cite{HorvathSzabo-A4} and \cite{IdziakKrzaczkowski}.

%
Let us define the sequence of terms
\begin{align*}
t_1(x,y_1) &= [x,y_1]\\
t_{k+1}(x,y_1,\ldots,y_n,y_{k+1}) &= [t_n(x,y_1,\ldots,y_k),y_{k+1}].
\end{align*}
Note that the length of $t_k$ in $(\mathbf G,{[\cdot,\cdot]})$ grows linearly in $k$. Furthermore, if for every $j = 1,\ldots,k$ we have $b_j \in G \setminus C_G(N)$, then $x \mapsto t_k(x,b_1,\ldots,b_k)$ is an automorphism of $N$ by Lemma~\ref{lemma:facts2} (2). If there is a $j$ with $b_j \in C_G(N)$, then $t_k(x,b_1,\ldots,b_k) = e$ for all $x \in N$.

%



Now let $(V;E)$ be an undirected graph, i.e. an input of the $|G/C_G(N)|$-coloring problem. Then we define the term 
$$r(x,(y_v)_{v\in V}) =  t_{|E|}\left(x,(y_{v_1} \cdot y_{v_2}^{-1})_{(v_1,v_2)\in E}\right).$$

By the properties of $t_{|E|}$, $r(x,(b_v)_{v\in V})$, for $x \in N$ is equal to $e$ if there is an edge $(v_1,v_2) \in E$, such that $b_{v_1} b_{v_2}^{-1} \in C_G(N)$. If however, for all edges $(v_1,v_2) \in E$ the elements $b_{v_1}$ and $b_{v_2}$ are in different cosets of $C_G(N)$, then $x \mapsto r(x,(b_v)_{v\in V})$ is a permutation of $N$. Note also, that for $x \in N$ the value of $r(x,(b_v)_{v\in V})$ only depends on the $C_G(N)$-cosets of the elements $b_v$.

Thus if the equation $r(x,(y_v)_{v\in V}) = n$ for some $n \in N \setminus \{e\}$ has a solution with $x \in N$, then the graph is $|G/C_G(N)|$-colorable: just color the vertices by the corresponding $C_G(N)$ cosets of the solution to the equation. Conversely a proper $|G/C_G(N)|$-coloring of the graph gives rise to a solution of the equation, by assigning representatives of the $G/C_G(N)$-classes to the $|G/C_G(N)|$-many colors. Analogously the identity $r(x,(y_v)_{v\in V}) = e$ holds for all $x \in N$ if there is no proper $|G/C_G(N)|$-coloring of the graph $(V;E)$. 

At last notice that we can restrict ourselves to solutions with $x \in N$, since $N$ is verbal. By $[N,G]=N$ every element of $N$ can be written as the product of commutator expressions $[n,g]$, $n \in N$. Thus there is a polynomial of the form $s_N(z_1, \ldots, z_k) = [n_1,z_1] \cdot [n_2,z_2] \cdots  [n_k,z_k]$ with $n_1, \ldots, n_k \in N$ such that $N$ is its range. When substituting $x = s_N(\bar z)$ in $r$, the resulting term is still polynomial in the size of the graph $(V;E)$. Thus the $|G/C_G(N)|$-coloring problem reduces to $\Eq(\mathbf G)$ and its complement reduces to $\Id(\mathbf G)$. This concludes the proof.
\end{proof}

It is left to consider the case where $|G/C_G(N)| = 2$; we show that then $(\mathbf G,w)$ induces hard problems.

\begin{lemma} \label{lemma:even}
Let $G$ be a finite solvable, non-nilpotent group and let $w(x,y_1,y_2,y_3) = x^8([[x,y_1],y_2],y_3])$. Furthermore assume that there is a non-trivial minimal normal subgroup $N$ such that $C_G(N) \geq G'$ and $|G/C_G(N)| = 2$. Then $\Eq(\mathbf G,w)$ is NP-complete and $\Id(\mathbf G,w)$ is co-NP-complete.
\end{lemma}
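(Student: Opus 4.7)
The plan is to reduce 3-SAT (resp.\ its complement) to $\Eq(\mathbf G, w)$ (resp.\ $\Id(\mathbf G, w)$), using $w$ as a Boolean gate acting on the minimal normal subgroup $N$. Write $C := C_G(N)$. By Lemma~\ref{lemma:facts2} the hypotheses yield that $N$ is cyclic of some odd prime order $p$ and every $b \in G \setminus C$ acts on $N$ by inversion: $b^{-1} n b = n^{-1}$. The first step is to compute $w$ on $N$: for $x \in N$ one has $[x, y] = e$ if $y \in C$ and $[x, y] = x^{-1}(y^{-1}xy) = x^{-2}$ if $y \notin C$, so inducting over the three nested commutators yields
\[
[[[x, y_1], y_2], y_3] = \begin{cases} x^{(-2)^3} = x^{-8} & \text{if all } y_i \in G \setminus C, \\ e & \text{otherwise,} \end{cases}
\]
where in the second case a zero inner commutator is absorbed by subsequent brackets. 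Multiplying by the $x^8$ factor in the definition of $w$ gives, for $x \in N$,
\[
w(x, y_1, y_2, y_3) = \begin{cases} e & \text{if all } y_i \in G \setminus C, \\ x^8 & \text{if some } y_i \in C. \end{cases}
\]
Since $\gcd(8, p) = 1$, the map $x \mapsto x^8$ is an automorphism of $N$.

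Reading $y \in C$ as ``true'' and $y \notin C$ as ``false'', $w(\cdot, y_1, y_2, y_3)$ produces a nontrivial image of $x$ exactly when $y_1 \lor y_2 \lor y_3$ is true. Fixing some $b \in G \setminus C$, the substitution $y_i \mapsto b y_i$ flips the coset, hence the truth value, so arbitrary 3-literal clauses on Boolean variables $z_1, \ldots, z_n$ are expressible by appropriate choices in the $y$-slots. Conjunction of several clauses is obtained by nesting $w$'s in the first slot: setting $p^{(0)} := x$ and $p^{(j+1)} := w(p^{(j)}, \text{clause}_{j+1})$, one obtains inductively, for $x \in N$,
\[
p^{(m)}(x, \bar y) = \begin{cases} x^{8^m} & \text{if } \bar y \text{ satisfies all } m \text{ clauses,} \\ e & \text{otherwise,} \end{cases}
\]
because as soon as one clause fails the intermediate value becomes $e$, which is then absorbed by every subsequent $w$.

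Given a 3-CNF formula $\phi$ with $m$ clauses, let $p_\phi(x, \bar y)$ be the resulting nested polynomial, of length $O(|\phi|)$. To constrain $x$ to $N$ I substitute $x = s_N(\bar z)$ using the verbal polynomial from the proof of Lemma~\ref{lemma:odd}, whose range is $N$. Fixing any $n_0 \in N \setminus \{e\}$, the equation $p_\phi(s_N(\bar z), \bar y) = n_0$ is solvable iff $\phi$ is satisfiable: a satisfying $\bar y$ together with $\bar z$ chosen so that $s_N(\bar z)^{8^m} = n_0$ (possible since $8^m$ is invertible modulo $p$ and $s_N$ surjects onto $N$) yields a solution, and any solution forces $p_\phi \neq e$ and hence every clause to hold. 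Dually, $p_\phi(s_N(\bar z), \bar y) = e$ is an identity in $\mathbf G$ iff $\phi$ is unsatisfiable. Containment of $\Eq(\mathbf G, w)$ in NP and of $\Id(\mathbf G, w)$ in co-NP is standard. The main delicate point is that the exponent $8$ in the definition of $w$ must \emph{exactly} cancel the $(-2)^3 = -8$ produced by the triple commutator in the ``all $y_i$ outside $C$'' case; any other exponent would leave a spurious power of $x$ that propagates through the nesting and destroys the clean Boolean semantics.
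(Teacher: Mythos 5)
Your proposal is correct and follows essentially the same route as the paper: compute that on $N$ the triple commutator yields $x^{-8}$ exactly when all three $y_i$ lie outside $C_G(N)$ (so that $w$ acts as the automorphism $x\mapsto x^8$ iff some $y_i\in C_G(N)$, i.e.\ iff the clause is satisfied), encode negation by translating with a fixed $b\notin C_G(N)$, realize conjunction by nesting $w$ in the first slot, and restrict $x$ to $N$ via a verbal polynomial. The details, including the observation that the exponent $8$ must cancel $(-2)^3$, match the paper's argument.
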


\begin{proof}
%

By Lemma \ref{lemma:facts2} (3) we have for $b \notin C_G(N)$, $x \in N$ we have $b^{-1}xb = x^{-1}$ and consequently $[x,b] = x^{-2}$. We are going to prove the NP-hardness of $\Eq(\mathbf G,w)$ by encoding 3-SAT. For that observe that, by the above
\begin{align*}
w(e,y_1,y_2,y_3) &= e\\
w(x,y_1,y_2,y_3) &= e \text{ if } x \in N \text{ and } \forall~i: y_i \notin C_G(N)\\
w(x,y_1,y_2,y_3) &= x^8 \text{ if } x \in N \text{ and } \exists~i: y_i \in C_G(N)
\end{align*}

Thus if one of the elements $b_1,b_2,b_3$ lies $C_G(N)$, the map $x \mapsto w(x,b_1,b_2,b_3)$ is an automorphism of $N$. Otherwise it is constant and equal to $e$. Now let us define recursively the terms 
\begin{align*}
w_1(x,y_1,y_2,y_3) &= w(x,y_1,y_2,y_3),\\
w_{n+1}(x,y_1,y_2,\ldots y_{3n+3}) &= w(w_{n}(x,y_1,y_2,\ldots y_{3n}), y_{3n+1}, y_{3n+2}, y_{3n+3}).
\end{align*}
Then $x \mapsto w_n(x,b_1,\ldots b_{3n})$ is an automorphism of $N$, if in every triple $(b_{3i},b_{3i+1},b_{3i+2})$ at least one entry is from $C_G(N)$, otherwise it is constant and equal to $e$ (on $N$).

This allows us to encode an input of 3-SAT as an input of $\Eq(\mathbf G,w)$. Let $(l_1 \lor l_2 \lor l_3) \land \cdots \land (l_{3n} \lor l_{3n+1} \lor l_{3n+2})$ be a 3-CNF formula in variables $z_1, \ldots, z_k$ with the $l_i$ being the literals in variables $z_{i_j}$. Then let us form the polynomial $w_{n+1}(x, l_1', l_2',\ldots, l'_{3n+3})$ in variables $z_1', \ldots, z_k'$  with $ l_i' =  z_{i_j}'$ if $l_i = z_{i_j}$ and $ l_i' = b z_{i_j}'$ if $l_i = \neg z_{i_j}$, where $b$ is an element of $G \setminus C_G(N)$.

Let $n \in N \setminus \{e\}$. Then it is not hard to see that the equation $n = w_{n+1}(x, l_1', l_2',\ldots, l'_{3n+3})$ has a solution with $x \in N$ if and only if the instance of 3-SAT is satisfiable. Moreover $w_{n+1}(x, l_1', l_2',\ldots, l'_{3n+3})$ is constantly equal to $e$ for $x \in N$, if and only if the instance of 3-SAT is unsatisfiable.

$N$ is verbal, so as in the proof of Lemma \ref{lemma:odd} we can restrict us to $x \in N$, by substituting $x$ by a polynomial with range $N$. Thus 3-SAT reduces to $\Eq(\mathbf G,w)$ and its complement reduces to $\Id(\mathbf G,w)$ in polynomial time. We conclude that $\Eq(\mathbf G,w)$ is NP-complete, and $\Id(\mathbf G,w)$ is co-NP-complete.
\end{proof}

We remark that a proof of Lemma \ref{lemma:even} for the special case $G = S_3$ due to Idziak already appeared in \cite{GorazdKrzaczkowski}.
We are now ready to sum up the proof of Theorem~\ref{theorem:main}.

\begin{proof}[Proof of Theorem~\ref{theorem:main}]
Let $G$ be a finite solvable, non-nilpotent group and let $L$ be the last non-nilpotent element of its derived series. By Lemma \ref{lemma:reductions} (1),(3) and Lemma \ref{lemma:eqreduction} there is a group $H$ such that 
\begin{itemize}
\item For every term $f$, $\Eq(\mathbf H,f)$ reduces to $\Eq(\mathbf G,f)$ in polynomial time.
\item $H$ has a non-trivial minimal normal subgroup $N$ with $[H,N] = N$ and $[H',N]=\{e\}$,
\item $N$ can be picked such that $|H/C_H(N)| > 2$ if and only if $\expo(L/F(L)) > 2$.
\end{itemize}

If $|H/C_H(N)| > 2$ it follows from Lemma \ref{lemma:odd} that $\Eq(\mathbf H,[\cdot,\cdot])$ is NP-hard. If $|H/C_H(N)| = 2$, it follows from Lemma \ref{lemma:even} that $\Eq(\mathbf H,w)$ is NP-hard. The proof for the identity checking problem is analogous, using the reduction from Lemma \ref{lemma:reductions} (2),(4) and Lemma \ref{lemma:idreduction} instead.

Note that the inversion $^{-1}$ is actually not needed, since in any input we can substitute $(xy)^{-1} = y^{-1}x^{-1}$ and $x^{-1} = x^{|G|-1}$.
\end{proof}

\section{Discussion} \label{sect:discussion}

Theorem \ref{theorem:main} does not complete the complexity classification of $\Eq(\mathbf {G},[\cdot,\cdot])$ and $\Id(\mathbf {G},[\cdot,\cdot])$ for all finite groups $G$, as asked for in Problem 1 of \cite{HorvathSzabo-extendedgroups}. However the class $\mathcal C$ of groups for which the complexity is still unknown consists only of groups $G$ satisfying $G' \leq F(G) < G$ and $\exp(G/F(G)) = 2$ (and groups containing such $G$ in their derived series).

By Lemma~\ref{lemma:eqreduction} and Lemma~\ref{lemma:idreduction} it would suffices to show hardness only for all those $G \in \mathcal C$ that additionally have a minimal normal subgroup $N$ with $|G/C_G(N)| = 2$ in order to prove hardness for all $G \in \mathcal C$. By Lemma \ref{lemma:facts2} (3) the elements of $G \setminus C_G(N)$ then act on $N$ as by inversion; furthermore $N$ is a cyclic group of order $p$ for some odd prime $p$. Thus the semidirect product of $G/C_G(N)$ and $N$ is equal to the dihedral group $D_{2p}$. This naturally leads to the question:

\begin{question} \label{question:dihedral}
Let $p$ be an odd prime. What is the computational complexity of $\Eq(D_{2p},\cdot, [\cdot,\cdot])$ and $\Id(D_{2p}, \cdot, [\cdot,\cdot])$ for the dihedral group $D_{2p}$? In particular, what is the complexity for the symmetric group on 3 elements $D_6 = S_3$? (cf. Problem 1 in \cite{HorvathSzabo-extendedgroups}).
\end{question}

If $\Eq(D_{2p},[\cdot,\cdot])$ was NP-complete (respectively $\Id(D_{2p},[\cdot,\cdot])$ co-NP-complete) the proof would most likely rely on the action of $D_{2p} / \mathbb Z_p$ on $\Z_p$ and thus (using similar arguments as in Lemma \ref{lemma:even}) lift to all non-nilpotent groups with $\exp(G/F(G)) = 2$. But also other possible answers to Question \ref{question:dihedral} would be of central importance in getting a better understanding the expressive power of the commutator operation in finite groups.

\bibliographystyle{alpha}
\bibliography{kompatscher_equation_solvability}
\end{document}